\newtheorem{theorem}{Theorem}[section]
\newtheorem{lemma}[theorem]{Lemma}
\newtheorem{corollary}[theorem]{Corollary}
\theoremstyle{definition}
\theoremstyle{remark}
\newtheorem{remark}[theorem]{Remark}
\numberwithin{equation}{section}
\DeclareMathOperator\pp{\perp}
\def\np#1{\sideset{ ^{#1\!\!}}{}\pp}
\begin{document}
\setcounter{page}{1}

\title[Approximate bisectrix-orthogonality]{Approximately bisectrix-orthogonality preserving mappings}

\author[A. Zamani]{Ali Zamani}

\address{Department of Pure Mathematics, Center of Excellence in
Analysis on Algebraic Structures (CEAAS), Ferdowsi University of
Mashhad, P.O. Box 1159, Mashhad 91775, Iran.}
\email{\textcolor[rgb]{0.00,0.00,0.84}{zamani.ali85@yahoo.com}.}

\subjclass[2010]{Primary 46B20; Secondary 46C50, 47B99.}

\keywords{Bisectrix-orthogonality, Approximate orthogonality, Isometry, Orthogonality preserving mapping.}

\begin{abstract}
Regarding the geometry of a real normed space ${\mathcal X}$, we mainly introduce a notion of approximate bisectrix-orthogonality on vectors $x, y \in {\mathcal X}$ as follows:
$${x\np{\varepsilon}}_W y \mbox{~if and only if~} \sqrt{2}\frac{1-\varepsilon}{1+\varepsilon}\|x\|\,\|y\|\leq
\Big\|\,\|y\|x+\|x\|y\,\Big\|\leq\sqrt{2}\frac{1+\varepsilon}{1-\varepsilon}\|x\|\,\|y\|.$$
We study class of linear mappings preserving the approximately bisectrix-orthogonality ${\np{\varepsilon}}_W$. In particular, we show that
if $T: {\mathcal X}\to {\mathcal Y}$ is an approximate linear similarity, then
$${x\np{\delta}}_W y\Longrightarrow {Tx \np{\theta}}_W Ty \qquad (x, y\in {\mathcal X})$$
for any $\delta\in[0, 1)$ and certain $\theta\geq 0$.
\end{abstract} \maketitle

\section{Introduction and preliminaries}
There are several concepts of orthogonality appeared in the literature during the past century such as Birkhoff--James, Phythagorean, isosceles, Singer, Roberts, Diminnie, Carlsson, R\" atz, $\rho$-orthogonality, area orthogonality, etc, in an arbitrary real normed space ${\mathcal X}$, which can be regarded as generalizations of orthogonality in the inner product spaces, in general \cite{A, AMIR}. These are of intrinsic geometric interest and have been studied by many mathematicians. Among them we recall the following ones:

(i) {\it Birkhoff--James~ $\perp_B$}: $x\perp_B y$ if $\|x\|\leq \|x+ty\|$ for all scalars $t$ (see \cite{B}).

(ii) {\it Phythagorean ~ $\perp_P$}: $x\perp_P y$ if $\|x+y\|^2 = \|x\|^2 + \|y\|^2$ (see \cite{J}).

(iii) {\it Isosceles ~ $\perp_I$}: $x\perp_I y$ if $\|x+y\| = \|x-y\| $ (see \cite{J, M.M}).

(iv) {\it Roberts ~ $\perp_R$}: $x\perp_R y$ if $\|x+ty\| = \|x-ty\|$ for all scalars $t$ (see \cite{B.D}).\\
The following mapping $\langle.|.\rangle_g\  : {\mathcal X}\times{\mathcal X}\longrightarrow\mathbb{R}$ was introduced by Mili\v{c}i\v{c} \cite{MILI}:
$${\langle y|x\rangle}_g=\frac{1}{2}(\rho^\prime_+(x,y) +\rho^\prime_-(x,y)),$$
where mappings $\rho^\prime_+ , \rho^\prime_-: {\mathcal X}\times{\mathcal X}\longrightarrow\mathbb{R}$ are defined by
$$\rho^\prime_\pm(x,y) =\lim_{t\rightarrow0^\pm} \frac{\|x+ty\|^2-\|x\|^2}{2t}.$$
In addition the $\rho$-orthogonality $x\perp_\rho y$ means ${\langle y|x\rangle}_g=0$.\\
Note that $\perp_R, \perp_\rho \subseteq \perp_B$ \cite{A} and the relations $\perp_P, \perp_I, \perp_B$ are, however, independent.
If $({\mathcal H}, \langle.|.\rangle)$ is a real inner product space, then all above relations coincide with usual orthogonality $\perp$ derived from $\langle.|.\rangle$ \cite{AMIR}.\\
In the present note, we consider the so-called bisectrix-orthogonality and we study the orthogonality preserving property of this kind of orthogonality.\\
Let ${\mathcal X}$ be a real normed space and $x , y\in {\mathcal X}$. The bisectrix-orthogonality relation $x\perp_W y$ (cf. Section 5.2 in \cite{A}) is defined by
$$\Big\|\,\|y\|x+\|x\|y\,\Big\|=\sqrt{2}\|x\|\,\|y\|,$$
which for nonzero $x$ and $y$ means
$$\left \|\frac{x}{\|x\|}+\frac{y}{\|y\|}\right\|=\sqrt{2}.$$
For instance consider the space $(\mathbb{R}^2 ,\||.\||)$ where $\||(r, s)\||=\max\{|r|, |s|\}$ for $(r, s)\in\mathbb{R}^2$. Then $(1,0)\perp_W(r, s)$ if and only if either $(r, s)$ is the zero vector or
$$(r, s)\in(\{-\sqrt{2}-1,\sqrt{2}-1\})\times[-\sqrt{2},\sqrt{2}]\cup([-\sqrt{2}-1,\sqrt{2}-1]\times\{-\sqrt{2},\sqrt{2}\}).$$
Now we recall  some properties of bisectrix-orthogonality (the proofs can be found in \cite[Proposition 5.2.1]{A}):

(P.1) $x\perp_W 0, 0\perp_W y$ for all $x , y\in {\mathcal X}$;

(P.2) $x\perp_W y$ if and only if $y\perp_W x$;

(P.3) If $x\perp_W y$ and $x,y\neq0 $, then $x,y$ are linearly independent;

(P.4) If $x\perp_W y$ and $\alpha\beta\geq0 $, then $\alpha x\perp_W \beta y$;

(P.5) In an inner product space, $x\perp_W y$ if and only if ${\langle x|y\rangle}=0$.\\
By the definition of bisectrix-orthogonality and Pythagorean orthogonality one can easily get the following properties:

(P.6) For all nonzero vectors $x , y\in {\mathcal X}$, $x\perp_W y$ if and only if $\frac{x}{\|x\|}\perp_P \frac{y}{\|y\|}$;

(P.7) For all $x , y\in S_{\mathcal X}=\{z\in{\mathcal X}:\|z\|=1\}, x\perp_W y$  if and only if $x\perp_P y$.\\
We state some relations between bisectrix-orthogonality and other orthogonalities. It is known \cite{A,AMIR} that each of the following properties
implies that the norm $\|.\|$ comes from an inner product.

(P.8) $\perp_W \subseteq \perp_I$ over ${\mathcal X}$;

(P.9) $\perp_I \subseteq \perp_W$ over ${\mathcal X}$;

(P.10) $\perp_W \subseteq \perp_B$ over $S_{\mathcal X}$;

(P.11) $\perp_B \subseteq \perp_W$ over $S_{\mathcal X}$;

(P.12) If for all $x , y\in {\mathcal X}, x\perp_W y$ implies $\|x+y\|^2+\|x-y\|^2\sim2\|x\|^2+2\|y\|^2$\\
where $\sim$ stands either for $\leq$ or $\geq$.
An easy consequence of $\perp_R,\perp_\rho \subseteq \perp_B$ states that if for all $x, y\in S_{\mathcal X}$, the relation $x\perp_W y$ implies  $x\perp_R y$ or $x\perp_\rho y$, then the norm $\|.\|$ comes from an inner product.\\
In this paper we introduce two notions of approximate bisectrix-orthogonality $\np{\varepsilon}_W$ and $\perp_W^\varepsilon$ in a real normed space $\mathcal{X}$ and study the class of linear mappings which preserve the approximately bisectrix-orthogonality of type $\np{\varepsilon}_W$. In particular, we show that
if $T: {\mathcal X}\to {\mathcal Y}$ is an approximate linear similarity, then
$${x\np{\delta}}_W y\Longrightarrow {Tx \np{\theta}}_W Ty \qquad (x, y\in {\mathcal X})$$
for any $\delta\in[0, 1)$ and certain $\theta\geq 0$.

\section{Approximately bisectrix-orthogonality preserving mappings}
Let $\zeta, \eta$ be elements of an inner product space $({\mathcal H}, \langle.|.\rangle)$ and $\varepsilon\in[0 , 1)$. The approximate orthogonality $\zeta\perp^{\varepsilon}\eta$ defined by
$$|{\langle \zeta|\eta\rangle}|\leq\varepsilon\|\zeta\|\,\|\eta\|$$
or equivalently,
$$|\cos(\zeta,\eta)|\leq\varepsilon \,\,\,\,(\zeta, \eta \neq0).$$
So, it is natural to generalize the notion of approximate orthogonality for a real normed space ${\mathcal X}$.
This fact motivated Chmieli\'{n}ski and W\'{o}jcik \cite{C.W.1} to give for two elements $x , y\in {\mathcal X}$ the following definition of the approximate isosceles-orthogonality $x\np{\varepsilon}_I y$ as follows:
$$\Big|\,\|x + y\| - \|x - y\|\,\Big| \leq \varepsilon (\|x + y\| + \|x - y\|).$$
They also introduced another approximate isosceles-orthogonality $x\perp_I^\varepsilon y$ by
$$\Big|\,\|x + y\|^2 - \|x - y\|^2\,\Big| \leq 4\varepsilon \|x\|\|y\|.$$
Inspired by the above approximate isosceles-orthogonality, we propose two definitions of approximate bisectrix-orthogonality.\\
Let $\varepsilon\in[0 , 1)$ and $x , y\in {\mathcal X}$, let us put $x\np{\varepsilon}_W y$ if
$$\Big|\,\Big\|\,\|y\|x+\|x\|y\,\Big\|-\sqrt{2}\|x\|\,\|y\|\Big|\leq\varepsilon\Big(\Big\|\,\|y\|x+\|x\|y\,\Big\|+\sqrt{2}\|x\|\,\|y\|\Big)$$
or equivalently,
$$\sqrt{2}\frac{1-\varepsilon}{1+\varepsilon}\|x\|\,\|y\|\leq
\Big\|\,\|y\|x+\|x\|y\,\Big\|\leq\sqrt{2}\frac{1+\varepsilon}{1-\varepsilon}\|x\|\,\|y\|,$$
which means
$$\sqrt{2}\frac{1-\varepsilon}{1+\varepsilon}\leq\Big \|\frac{x}{\|x\|}+\frac{y}{\|y\|}\Big\|\leq\sqrt{2}\frac{1+\varepsilon}{1-\varepsilon}$$
for nonzero vectors $x$ and $y$.\\
Also we define $x\perp_W^\varepsilon y$ if
$$\Big|\,\Big\|\,\|y\|x+\|x\|y\,\Big\|^2-2\|x\|^2\,\|y\|^2\,\Big|\leq2\varepsilon\|x\|^2\,\|y\|^2$$
or equivalently,
$$\sqrt{2(1-\varepsilon)}\|x\|\,\|y\|\leq \Big\|\,\|y\|x+\|x\|y\,\Big\|\leq\sqrt{2(1+\varepsilon)}\|x\|\,\|y\|,$$
which means
$$\sqrt{2(1-\varepsilon)}\leq \Big\|\,\frac{x}{\|x\|}+\frac{y}{\|y\|}\,\Big\|\leq\sqrt{2(1+\varepsilon)}$$
for nonzero $x , y\in {\mathcal X}$.\\
It is easy to check that in the case where the norm comes from a real-valued inner product we have
$${x\np{\varepsilon}}_W y\Leftrightarrow|\langle x, y \rangle|\leq\frac{4\varepsilon}{(1-\varepsilon)^2}\|x\|\,\|y\|$$
and
$$x\perp_W^\varepsilon y\Leftrightarrow|\langle x, y \rangle|\leq \varepsilon\|x\|\,\|y\|\Leftrightarrow x\perp^\varepsilon y$$
Thus the second approximate bisectrix-orthogonality coincides with the natural notion of approximate orthogonality for inner product spaces.\\
Note that the relations $\np{\varepsilon}_W$ and $\perp_W^\varepsilon$ are symmetric and almost homogeneous in the sense that\\
$${x\np{\varepsilon}}_W y\Longrightarrow {y\np{\varepsilon}}_W x \ \ \mbox{and} \ \ {\alpha x\np{\varepsilon}}_W \beta y \ \ \mbox{for} \ \ \alpha\beta\geq0 $$
and
$$x\perp_W^\varepsilon y \Longrightarrow y\perp_W^\varepsilon x \ \ \mbox{and} \ \ \alpha x\perp_W^\varepsilon\beta y \ \ \mbox{for} \ \ \alpha\beta\geq0.$$
\begin{remark}\label{pr.0}
It is easy to see that $\perp_W^\varepsilon $ implies $\np{\varepsilon}_W $ with the same $\varepsilon$.
Also if $\varepsilon\in[0 , \frac{1}{16})$, then $\np{\varepsilon}_W $ implies $\perp_W^{16\varepsilon} $. Indeed, for $x,y\neq0$, since $0\leq\varepsilon<\frac{1}{16}$, so $\frac{1+\varepsilon}{1-\varepsilon}\leq\sqrt{1+16\varepsilon}$ and $\sqrt{1-16\varepsilon}\leq\frac{1-\varepsilon}{1+\varepsilon}$. Hence $x\np{\varepsilon}_W y$ implies $$\sqrt{2(1-16\varepsilon)}\leq\left\|\frac{x}{\|x\|}+\frac{y}{\|y\|}\right\|\leq\sqrt{2(1+16\varepsilon)}$$
or equivalently, $x\perp_W^{16\varepsilon} y$.
\end{remark}
Now, suppose that ${\mathcal X}$ and ${\mathcal Y}$ are real normed spaces of dimensions greater than or equal to two and let $\delta, \varepsilon\in[0, 1)$. We say that a linear mapping $T: {\mathcal X}\to {\mathcal Y}$ preserves the approximate bisectrix-orthogonality if
$${x\np{\delta}}_W y\Longrightarrow {Tx\np{\varepsilon}}_W Ty \qquad ( x, y\in{\mathcal X}).$$
Notice that if $\delta=\varepsilon=0$, we have
$$x\perp_W y\Longrightarrow Tx\perp_W Ty \qquad ( x, y\in{\mathcal X}),$$
and we say that $T$ preserves the bisectrix-orthogonality.\\
Koldobsky \cite{KOL} (for real spaces) and Blanco and Turn\v{s}ek \cite{B.T} (for real and complex ones) proved that a linear mapping $T: {\mathcal X}\to {\mathcal Y}$ preserving the Birkhoff orthogonality has to be a similarity, i.e., a non-zero-scalar multiple of an isometry. Further, Chmieli\'{n}ski and W\'{o}jcik \cite{C.W.2,W} proved that a linear mapping $T: {\mathcal X}\to {\mathcal Y}$ preserving the $\rho$-orthogonality has to be a similarity. Approximately orthogonality preserving mappings in the framework of normed spaces have been recently studied. In the case where $\delta=0$, Moj\v{s}kerc and Turn\v{s}ek \cite{M.T} and Chmieli\'{n}ski \cite{J.C.2} verified the properties of mappings that preserve approximate Birkhoff orthogonality. Also Chmieli\'{n}ski and W\'{o}jcik \cite{C.W.1,C.W.2} studied some properties of mappings that preserve approximate isosceles-orthogonality and $\rho$-orthogonality in the case when $\delta=0$. Recently Zamani and Moslehian \cite{ZAM} studied approximate Roberts orthogonality preserving mappings.\\
The next lemma plays an essential role in our work. It provides indeed a reverse of the triangle inequality; see \cite{M.D}.
\begin{lemma}\cite[Theorem 1]{MAL}\label{app.2}
Let ${\mathcal X}$ be a normed space and $x, y\in {\mathcal X}\setminus\{0\}$. Then
\begin{align*}
\|x\|+\|y\|+(\Big\|\,\frac{x}{\|x\|}&+\frac{y}{\|y\|}\,\Big\|-2)\max\{\|x\|, \|y\|\}
\\&\leq\|x+y\|
\\&\leq\|x\|+\|y\|+(\Big\|\,\frac{x}{\|x\|}+\frac{y}{\|y\|}\,\Big\|-2)\min\{\|x\|, \|y\|\}
\end{align*}
\end{lemma}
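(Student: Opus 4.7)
The plan is to prove both inequalities via a carefully chosen rewriting of $x+y$, followed by a single application of the ordinary (resp.\ reverse) triangle inequality. Without loss of generality I would assume $\|x\|\leq\|y\|$, so that $\min\{\|x\|,\|y\|\}=\|x\|$ and $\max\{\|x\|,\|y\|\}=\|y\|$. Abbreviating $u:=x/\|x\|$ and $v:=y/\|y\|$, we have $x=\|x\|u$, $y=\|y\|v$, and the bracketed quantity becomes $\|u+v\|-2$, which is nonpositive by the triangle inequality applied to the unit vectors $u,v$.

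For the upper bound I would factor out the \emph{smaller} scalar $\|x\|$, writing
$$x+y=\|x\|(u+v)+(\|y\|-\|x\|)v.$$
The ordinary triangle inequality then yields
$$\|x+y\|\leq\|x\|\,\|u+v\|+(\|y\|-\|x\|)\|v\|=\|x\|\,\|u+v\|+\|y\|-\|x\|,$$
and rearranging this as $\|x\|+\|y\|+\|x\|(\|u+v\|-2)$ gives exactly the claimed upper bound with $\min\{\|x\|,\|y\|\}=\|x\|$.

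For the lower bound I would instead factor out the \emph{larger} scalar $\|y\|$, writing
$$x+y=\|y\|(u+v)-(\|y\|-\|x\|)u.$$
The reverse triangle inequality $\|A-B\|\geq\|A\|-\|B\|$ now gives
$$\|x+y\|\geq\|y\|\,\|u+v\|-(\|y\|-\|x\|)\|u\|=\|y\|\,\|u+v\|-\|y\|+\|x\|,$$
which rearranges to $\|x\|+\|y\|+\|y\|(\|u+v\|-2)$, i.e.\ the lower bound with $\max\{\|x\|,\|y\|\}=\|y\|$.

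The argument is essentially computational once the right rewritings are in place; the only real choice is which scalar to pull out in each direction. The reason the upper bound uses $\|x\|$ while the lower bound uses $\|y\|$ is that $\|u+v\|-2\leq 0$, so multiplying this nonpositive factor by the smaller norm produces the \emph{larger} (i.e.\ upper) estimate, and multiplying it by the larger norm produces the \emph{smaller} (i.e.\ lower) estimate. Beyond making this pairing correctly, no further machinery is required.
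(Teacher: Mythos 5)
Your proof is correct, and it is essentially the standard argument from the cited source (Maligranda, \emph{Simple norm inequalities}); the paper itself states this lemma with a citation and gives no proof. Both decompositions check out algebraically, and your observation about pairing the nonpositive factor $\|u+v\|-2$ with the smaller norm for the upper bound and the larger norm for the lower bound is exactly the right organizing principle.
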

To reach our main result, we need some lemmas, which are interesting on their own right. We state some prerequisites for the first lemma. For a bounded linear mapping $T: {\mathcal X}\to {\mathcal Y}$, let $\|T\|=\sup\{ \|Tx\|; \ \|x\|=1\}$ denote the operator norm and $[T]: =\inf\{ \|Tx\|; \ \|x\|=1\}$. Notice that for any $x\in {\mathcal X}$, we have $[T]\|x\|\leq\|Tx\|\leq\|T\|\|x\|.$
\begin{lemma}\label{th.1}
Let $\delta, \varepsilon\in[0, 1)$. If a nonzero bounded linear mapping $T: {\mathcal X}\to {\mathcal Y}$ satisfies $$\frac{1-\varepsilon}{1+\varepsilon}\gamma\|x\|\leq\|Tx\|\leq\frac{1+\varepsilon}{1-\varepsilon}\gamma\|x\|$$
for all $x\in{\mathcal X}$ and all $\gamma\in\Big[\frac{1-\delta}{1+\delta}[T] , \frac{1+\delta}{1-\delta}\|T\|\Big]$, then
$${x\np{\delta}}_W y\Longrightarrow {Tx \np{\varepsilon}}_W Ty \qquad  (x , y\in {\mathcal X}).$$
\end{lemma}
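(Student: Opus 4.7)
The plan is to reduce the conclusion $Tx\np{\varepsilon}_W Ty$ to a one-variable sandwich estimate. First, by the homogeneity of $\np{\cdot}_W$, I may assume $\|x\|=\|y\|=1$; the hypothesis forces $[T]>0$ (and hence $Tx,Ty\neq 0$, since otherwise taking $\gamma\to 0$ would collapse the admissible range). Setting
$$K := \Big\|\tfrac{Tx}{\|Tx\|}+\tfrac{Ty}{\|Ty\|}\Big\|,$$
direct factoring gives $\bigl\|\,\|Ty\|Tx+\|Tx\|Ty\,\bigr\| = \|Tx\|\,\|Ty\|\,K$, so the goal is $\sqrt{2}\tfrac{1-\varepsilon}{1+\varepsilon}\le K\le\sqrt{2}\tfrac{1+\varepsilon}{1-\varepsilon}$.

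The main tool is Lemma~\ref{app.2} applied to $u=Tx$ and $v=Ty$. Writing $m:=\min(\|Tx\|,\|Ty\|)$ and $M:=\max(\|Tx\|,\|Ty\|)$, it yields
$$(K-1)M+m\;\le\;\|T(x+y)\|\;\le\;(K-1)m+M,$$
which traps $K$ between $1+(\|T(x+y)\|-M)/m$ from below and $1+(\|T(x+y)\|-m)/M$ from above. The matter is therefore reduced to estimating $\|T(x+y)\|$.

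The crucial step is to invoke the hypothesis at the two extreme endpoints of the admissible $\gamma$-interval, chosen precisely so that the $\delta$-factors cancel the $(1\pm\delta)$ factors in $\|x+y\|$ coming from $x\np{\delta}_W y$. For the upper control, take $\gamma=\tfrac{1-\delta}{1+\delta}[T]$; combining $\|T(x+y)\|\le\tfrac{1+\varepsilon}{1-\varepsilon}\gamma\|x+y\|$ with $\|x+y\|\le\sqrt{2}\tfrac{1+\delta}{1-\delta}$, the $(1\pm\delta)$ terms collapse and give $\|T(x+y)\|\le\sqrt{2}\tfrac{1+\varepsilon}{1-\varepsilon}[T]$. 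Substituting and using $m\ge[T]$ together with $[T]/M\le 1$, I obtain
$$\tfrac{\|T(x+y)\|-m}{M}\le \tfrac{[T]}{M}\Bigl(\sqrt{2}\tfrac{1+\varepsilon}{1-\varepsilon}-1\Bigr)\le \sqrt{2}\tfrac{1+\varepsilon}{1-\varepsilon}-1,$$
hence $K\le\sqrt{2}(1+\varepsilon)/(1-\varepsilon)$. Symmetrically, $\gamma=\tfrac{1+\delta}{1-\delta}\|T\|$ together with $\|x+y\|\ge\sqrt{2}\tfrac{1-\delta}{1+\delta}$ yields $\|T(x+y)\|\ge\sqrt{2}\tfrac{1-\varepsilon}{1+\varepsilon}\|T\|$, and $M\le\|T\|$ then produces the matching lower bound on $K$.

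The principal obstacle is the bookkeeping: the ratios $[T]/M\le 1$ and $\|T\|/m\ge 1$ must be applied in the correct direction so that no extraneous $\|T\|/[T]$ factor survives in the final estimate. The seemingly peculiar widening of the $\gamma$-range by the factor $(1\pm\delta)/(1\mp\delta)$ in the hypothesis is tailored exactly so that at its endpoints the $(1\pm\delta)$ factors absorb cleanly against those of the $\np{\delta}_W$ hypothesis, leaving only $(1\pm\varepsilon)$ in the conclusion.
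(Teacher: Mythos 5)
Your strategy is genuinely different from the paper's: you fix the two endpoint values of $\gamma$ and apply Lemma~\ref{app.2} to the image vectors $Tx, Ty$, whereas the paper applies the hypothesis once, with a single tailored value $\gamma_0=\sqrt{2}\,\big\|\tfrac{x}{\|Tx\|}+\tfrac{y}{\|Ty\|}\big\|^{-1}$ and the single test vector $z=\|Ty\|x+\|Tx\|y$ (for which $\gamma_0\|z\|=\sqrt{2}\|Tx\|\,\|Ty\|$ identically), using Lemma~\ref{app.2} only to check that $\gamma_0$ lies in the admissible interval. Your upper estimate is correct: with $C=\sqrt{2}\tfrac{1+\varepsilon}{1-\varepsilon}$ one has $C-1>0$, so $\tfrac{C[T]-m}{M}\leq\tfrac{(C-1)[T]}{M}\leq C-1$ follows from $m\geq[T]$ and $[T]\leq M$, and indeed $K\leq C$.

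The lower bound, however, has a genuine gap. Writing $c=\sqrt{2}\tfrac{1-\varepsilon}{1+\varepsilon}$, your chain needs $\tfrac{c\|T\|-M}{m}\geq c-1$, i.e.\ $c(\|T\|-m)\geq M-m$. From $M\leq\|T\|$ you only get $c\|T\|-M\geq(c-1)\|T\|$, and the final step $(c-1)\|T\|/m\geq c-1$ requires $c\geq 1$ because $\|T\|/m\geq 1$; the inequality flips when $c<1$, which happens precisely for $\varepsilon>3-2\sqrt{2}\approx 0.17$. Concretely, take $\delta=0$, $\varepsilon=\tfrac12$ (so $c=\sqrt{2}/3$), $\|T\|=M=1$, $[T]=m=\tfrac13$: all the inequalities you have established are consistent with these values (the hypothesis only forces $\|T\|\leq\tfrac{1+\varepsilon}{1-\varepsilon}[T]=3[T]$ and $m\geq\tfrac{1-\varepsilon}{1+\varepsilon}\|T\|$), yet your bound gives $K\geq 1+\tfrac{\sqrt{2}/3-1}{1/3}=\sqrt{2}-2<0$, which is far weaker than the required $K\geq\sqrt{2}/3$. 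So your argument proves the lemma only for $\varepsilon\leq 3-2\sqrt{2}$; for larger $\varepsilon$ the information you retain (the endpoint estimates on $\|T(x+y)\|$ plus Lemma~\ref{app.2}) is provably insufficient, and something like the paper's $x,y$-dependent choice of $\gamma_0$ is needed to close the case $c<1$.
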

\begin{proof}
Let $x, y\in {\mathcal X}\setminus\{0\}$ and $x\np{\delta}_W y$. Then $\sqrt{2}\frac{1-\delta}{1+\delta}\leq\Big \|\frac{x}{\|x\|}+\frac{y}{\|y\|}\Big\|\leq\sqrt{2}\frac{1+\delta}{1-\delta}$. If $x=sy$ for some $s\in\mathbb{R}\smallsetminus\{0\}$, then for $\gamma=\frac{1-\delta}{1+\delta}[T]$ we have
\begin{align*}
\Big\|\frac{Tx}{\|Tx\|}+\frac{Ty}{\|Ty\|}\Big\|&=\Big\|T\Big(\frac{x}{\|Tx\|}+\frac{y}{\|Ty\|}\Big)\Big\|
\\&=\Big\|T\Big(\frac{sy}{\|sTy\|}+\frac{y}{\|Ty\|}\Big)\Big\|
\\&=\frac{\|y\|}{\|Ty\|}\Big\|T\Big(\frac{sy}{\|sy\|}+\frac{y}{\|y\|}\Big)\Big\|
\\&=\frac{\|y\|}{\|Ty\|}\Big\|T\Big(\frac{x}{\|x\|}+\frac{y}{\|y\|}\Big)\Big\|
\\&\leq\frac{\|y\|}{\|Ty\|}\frac{1+\varepsilon}{1-\varepsilon}\gamma\Big\|\frac{x}{\|x\|}+\frac{y}{\|y\|}\Big\|
\\&= \frac{\|y\|}{\|Ty\|}\frac{1+\varepsilon}{1-\varepsilon}\frac{1-\delta}{1+\delta}[T]\Big\|\frac{x}{\|x\|}+\frac{y}{\|y\|}\Big\|
\\&\leq \frac{1+\varepsilon}{1-\varepsilon}\frac{1-\delta}{1+\delta}\sqrt{2}\frac{1+\delta}{1-\delta}
\\&=\sqrt{2}\frac{1+\varepsilon}{1-\varepsilon}\,,
\end{align*}
whence $\Big\|\frac{Tx}{\|Tx\|}+\frac{Ty}{\|Ty\|}\Big\|\leq\sqrt{2}\frac{1+\varepsilon}{1-\varepsilon}$. Similarly, $\sqrt{2}\frac{1-\varepsilon}{1+\varepsilon}\leq\Big\|\frac{Tx}{\|Tx\|}+\frac{Ty}{\|Ty\|}\Big\|$. Thus $Tx\np{\varepsilon}_W Ty$.
Assume that $x, y$ are linearly independent. Set $\gamma_0: =\frac{\sqrt{2}}{\left\|\frac{x}{\|Tx\|}+\frac{y}{\|Ty\|}\right\|}$. We may assume that $\frac{\|x\|}{\|Tx\|}\leq\frac{\|y\|}{\|Ty\|}.$ By Lemma \ref{app.2} we have
\begin{align*}
\Big\|\frac{x}{\|Tx\|}+\frac{y}{\|Ty\|}\Big\|&\leq\frac{\|x\|}{\|Tx\|}+\frac{\|y\|}{\|Ty\|}+(\Big\|\,\frac{x}{\|x\|}+
\frac{y}{\|y\|}\,\Big\|-2)\min\{\frac{\|x\|}{\|Tx\|}, \frac{\|y\|}{\|Ty\|}\}
\\&\leq\frac{\|y\|}{\|Ty\|}+(\sqrt{2}\frac{1+\delta}{1-\delta}-1)\frac{\|x\|}{\|Tx\|}
\\&\leq\frac{1}{[T]}+(\sqrt{2}\frac{1+\delta}{1-\delta}-1)\frac{1}{[T]}
\\&=\sqrt{2}\frac{1+\delta}{1-\delta}\frac{1}{[T]}.
\end{align*}
So that $\gamma_0\geq\frac{\sqrt{2}}{\sqrt{2}\frac{1+\delta}{1-\delta}\frac{1}{[T]}}=\frac{1-\delta}{1+\delta}[T]$.\\
Similarly we get $\gamma_0\leq \frac{1+\delta}{1-\delta}\|T\|$. Thus $\gamma_0\in\Big[\frac{1-\delta}{1+\delta}[T] , \frac{1+\delta}{1-\delta}\|T\|\Big]$. Our hypothesis implies that
$$\frac{1-\varepsilon}{1+\varepsilon}\gamma_0 \|z\|\leq\|Tz\|\leq\frac{1+\varepsilon}{1-\varepsilon}\gamma_0 \|z\| \qquad (z\in{\mathcal X})$$
or equivalently,
$$\Big|\|Tz\|-\gamma_ 0\|z\|\Big|\leq\varepsilon(\|Tz\|+\gamma_0\|z\|) \qquad (z\in{\mathcal X})$$
Putting $\|Ty\|x+\|Tx\|y$ instead of $z$ in the above inequality we get
\begin{align*}
\Big|\,\Big\|\,\|Ty\|Tx+&\|Tx\|Ty\,\Big\|-\frac{\sqrt{2}}{\left\|\frac{x}{\|Tx\|}+\frac{y}{\|Ty\|}\right\|}\Big\|\,\|Ty\|x+\|Tx\|y\,\Big\|\,\Big|
\\&\leq\varepsilon\Big(\Big\|\,\|Ty\|Tx+\|Tx\|Ty\,\Big\|+\frac{\sqrt{2}}{\left\|\frac{x}{\|Tx\|}+\frac{y}{\|Ty\|}\right\|}\Big\|\,\|Ty\|x+\|Tx\|y\,\Big\|\Big).
\end{align*}
Thus
$$\Big|\,\Big\|\,\|Ty\|Tx+\|Tx\|Ty\,\Big\|-\sqrt{2}\|Tx\|\,\|Ty\|\,\Big|\leq\varepsilon\Big(\Big\|\,\|Ty\|Tx+\|Tx\|Ty\,\Big\|+\sqrt{2}\|Tx\|\,\|Ty\|\Big),$$
whence $Tx \np{\varepsilon}_W Ty$.
\end{proof}
\begin{lemma}\label{co.2}
Let $\delta, \varepsilon\in[0, 1)$. If a nonzero bounded linear mapping $T: {\mathcal X}\to{\mathcal Y}$ satisfies $\frac{1+\delta}{1-\delta}\|Tz\|\,\|u\|\leq\frac{1+\varepsilon}{1-\varepsilon}\|Tu\|\,\|z\|$ for all $z, u\in{\mathcal X}$, then
$${x\np{\delta}}_W y\Longrightarrow {Tx \np{\varepsilon}}_W Ty \qquad  (x , y\in {\mathcal X}).$$
\end{lemma}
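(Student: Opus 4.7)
The strategy is to extract from the given inequality a two-sided operator bound of exactly the form required by Lemma~\ref{th.1}, and then quote that lemma. Concretely, the plan is to show that
$$\tfrac{1-\varepsilon}{1+\varepsilon}\,\gamma\,\|x\|\;\leq\;\|Tx\|\;\leq\;\tfrac{1+\varepsilon}{1-\varepsilon}\,\gamma\,\|x\|$$
holds for every $x\in\mathcal{X}$ and every $\gamma\in\bigl[\tfrac{1-\delta}{1+\delta}[T],\tfrac{1+\delta}{1-\delta}\|T\|\bigr]$, after which Lemma~\ref{th.1} delivers the conclusion immediately.

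First I would rewrite the hypothesis. For nonzero $z,u\in\mathcal{X}$, dividing $\tfrac{1+\delta}{1-\delta}\|Tz\|\,\|u\|\leq \tfrac{1+\varepsilon}{1-\varepsilon}\|Tu\|\,\|z\|$ by $\|z\|\,\|u\|$ gives
$$\frac{\|Tz\|}{\|z\|}\;\leq\;\tfrac{1+\varepsilon}{1-\varepsilon}\tfrac{1-\delta}{1+\delta}\frac{\|Tu\|}{\|u\|}.$$
Taking the supremum over all unit $z$ on the left yields $\|T\|\leq\tfrac{1+\varepsilon}{1-\varepsilon}\tfrac{1-\delta}{1+\delta}\tfrac{\|Tu\|}{\|u\|}$, which rearranges to
$$\tfrac{1-\varepsilon}{1+\varepsilon}\tfrac{1+\delta}{1-\delta}\|T\|\,\|u\|\;\leq\;\|Tu\|\qquad(u\in\mathcal{X}).$$
Symmetrically, taking the infimum over all unit $u$ on the right gives
$$\|Tz\|\;\leq\;\tfrac{1+\varepsilon}{1-\varepsilon}\tfrac{1-\delta}{1+\delta}[T]\,\|z\|\qquad(z\in\mathcal{X}).$$
Since $T\neq 0$, the first of these forces $[T]>0$, so in particular the interval $\bigl[\tfrac{1-\delta}{1+\delta}[T],\tfrac{1+\delta}{1-\delta}\|T\|\bigr]$ is a nondegenerate subset of $(0,\infty)$.

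Next I would deduce the hypothesis of Lemma~\ref{th.1}. For any $\gamma$ in that interval, $\gamma\geq\tfrac{1-\delta}{1+\delta}[T]$ combined with the second inequality above yields $\|Tx\|\leq\tfrac{1+\varepsilon}{1-\varepsilon}\gamma\,\|x\|$, while $\gamma\leq\tfrac{1+\delta}{1-\delta}\|T\|$ combined with the first yields $\tfrac{1-\varepsilon}{1+\varepsilon}\gamma\,\|x\|\leq\|Tx\|$. Both inequalities of Lemma~\ref{th.1} thus hold for every such $\gamma$, and applying that lemma gives ${x\np{\delta}}_W y\Longrightarrow {Tx\np{\varepsilon}}_W Ty$ for all $x,y\in\mathcal{X}$.

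There is no real obstacle here beyond bookkeeping; the crux is simply recognizing that the hypothesis is a ``product form'' which decouples into independent bounds on $\|T\|$ and $[T]$ via the sup/inf manipulation, and that those bounds are exactly what Lemma~\ref{th.1} consumes. The only point one must not overlook is that $T\neq 0$ is essential to guarantee $[T]>0$ so that the target interval is a legitimate one.
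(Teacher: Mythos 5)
Your proposal is correct and follows essentially the same route as the paper: both extract from the product-form hypothesis the bounds $\tfrac{1+\delta}{1-\delta}\|T\|\leq\tfrac{1+\varepsilon}{1-\varepsilon}[T]$ (you split it into two pointwise inequalities via sup over $z$ and inf over $u$, the paper takes inf then sup on unit vectors — an equivalent bookkeeping), verify the two-sided bound for every $\gamma$ in $\bigl[\tfrac{1-\delta}{1+\delta}[T],\tfrac{1+\delta}{1-\delta}\|T\|\bigr]$, and conclude by Lemma~\ref{th.1}.
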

\begin{proof}
By our assumption we have, $\frac{1+\delta}{1-\delta}\|Tz\|\leq\frac{1+\varepsilon}{1-\varepsilon}\|Tu\|$ for all $z, u$ with $\|z\|=\|u\|=1.$
Passing to the infimum over $\|u\|=1$, we get
$$\frac{1+\delta}{1-\delta}\|Tz\|\leq\frac{1+\varepsilon}{1-\varepsilon}[T] \quad (\|z\|=1),$$
and passing to the supremum over $\|z\|=1$ we obtain
$$\frac{1+\delta}{1-\delta}\|T\|\leq\frac{1+\varepsilon}{1-\varepsilon}[T].$$
Now, let $\gamma\in\Big[\frac{1-\delta}{1+\delta}[T] , \frac{1+\delta}{1-\delta}\|T\|\Big]$ and $x\in{\mathcal X}$. Therefore we have
\begin{align*}
\frac{1-\varepsilon}{1+\varepsilon}\gamma\|x\|&\leq\frac{1-\varepsilon}{1+\varepsilon}\times\frac{1+\delta}{1-\delta}\|T\|\,\|x\|
\\&\leq\frac{1-\varepsilon}{1+\varepsilon}\times\frac{1+\delta}{1-\delta}\times\frac{1+\varepsilon}{1-\varepsilon}\times\frac{1-\delta}{1+\delta}[T]\,\|x\|
\\&\leq\|Tx\|
\\&\leq\|T\|\,\|x\|
\\&\leq\frac{1+\varepsilon}{1-\varepsilon}\times\frac{1-\delta}{1+\delta}[T]\,\|x\|
\\&\leq\frac{1+\varepsilon}{1-\varepsilon}\times\frac{1-\delta}{1+\delta}\times\frac{1+\delta}{1-\delta}\gamma\|x\|
\\&=\frac{1+\varepsilon}{1-\varepsilon}\gamma\|x\|.
\end{align*}
Thus
$$\frac{1-\varepsilon}{1+\varepsilon}\gamma\|x\|\leq\|Tx\|\leq\frac{1+\varepsilon}{1-\varepsilon}\gamma\|x\|$$
Making a use of Lemma \ref{th.1} just completes the proof.
\end{proof}
We are now in position to establish the main result. Following \cite{M.T}, we say that a linear mapping $U: {\mathcal X}\to {\mathcal Y}$ is an approximate linear isometry if
$$(1-\varphi_1 (\varepsilon))\|z\|\leq\|Uz\|\leq(1+\varphi_2 (\varepsilon))\|z\| \quad (z \in {\mathcal X}),$$
where $\varphi_1 (\varepsilon)\rightarrow0$ and $\varphi_2 (\varepsilon)\rightarrow0$ as $\varepsilon\rightarrow0$. Notice that if $\varphi_1 (\varepsilon)=\varphi_2 (\varepsilon)=0$, then $U$ is an isometry.

A linear mapping $U: {\mathcal X}\to{\mathcal Y}$ is said to be an approximate similarity if it is a non-zero-scalar multiple of an approximate linear isometry, or equivalently it satisfies $$\lambda(1-\varphi_1 (\varepsilon))\|w\|\leq\|Uw\|\leq\lambda(1+\varphi_2 (\varepsilon))\|w\|$$ for some unitary $U$, some $\lambda > 0$ and for all $w \in {\mathcal X}$, where
$\varphi_1 (\varepsilon)\rightarrow0$ and $\varphi_2 (\varepsilon)\rightarrow0$ as $\varepsilon\rightarrow0$.

\begin{theorem}
Let $U: {\mathcal X}\to{\mathcal Y}$ be an approximate linear similarity and $\delta\in[0, 1)$. If a nonzero bounded linear
mapping $T: {\mathcal X}\to{\mathcal Y}$ satisfies $\|T-U\|\leq\varepsilon\|U\|$, then
$${x\np{\delta}}_W y\Longrightarrow {Tx \np{\theta}}_W Ty \qquad  (x , y\in {\mathcal X}),$$
where $\theta=\frac{2\delta+2\varepsilon+(1-\delta)\varphi_1 (\varepsilon)+(1+\delta+2\varepsilon)\varphi_2 (\varepsilon)}{2+2\delta\varepsilon-(1-\delta)\varphi_1 (\varepsilon)+(1+\delta+2\delta\varepsilon)\varphi_2 (\varepsilon)}$.
\end{theorem}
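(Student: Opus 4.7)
The plan is to verify the hypothesis of Lemma~\ref{co.2} with $\theta$ in place of $\varepsilon$ there; once this is done, the implication $x\np{\delta}_W y\Longrightarrow Tx\np{\theta}_W Ty$ drops out of that lemma. Concretely, the inequality one needs to establish is
$$\frac{1+\delta}{1-\delta}\|Tz\|\,\|u\|\le\frac{1+\theta}{1-\theta}\|Tu\|\,\|z\|\qquad(z,u\in\mathcal{X}).$$

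The first concrete step is to combine the two hypotheses to get uniform two-sided bounds on $\|Tz\|/\|z\|$. From $\|T-U\|\le\varepsilon\|U\|$ and the triangle inequality one has $\bigl|\,\|Tz\|-\|Uz\|\,\bigr|\le\varepsilon\|U\|\,\|z\|$; inserting the approximate-similarity estimate $\lambda(1-\varphi_1(\varepsilon))\|z\|\le\|Uz\|\le\lambda(1+\varphi_2(\varepsilon))\|z\|$ (which also forces $\|U\|\le\lambda(1+\varphi_2(\varepsilon))$) then yields
$$\lambda\bigl(1-\varphi_1(\varepsilon)-\varepsilon(1+\varphi_2(\varepsilon))\bigr)\|z\|\le\|Tz\|\le\lambda(1+\varepsilon)(1+\varphi_2(\varepsilon))\|z\|.$$
Dividing the upper bound for $\|Tz\|$ by the lower bound for $\|Tu\|$ causes $\lambda$ to cancel, producing a single constant $M$ such that $\|Tz\|\,\|u\|\le M\,\|Tu\|\,\|z\|$ for all $z,u\in\mathcal{X}$, where
$$M=\frac{(1+\varepsilon)(1+\varphi_2(\varepsilon))}{1-\varphi_1(\varepsilon)-\varepsilon(1+\varphi_2(\varepsilon))}.$$

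The remaining step, which I expect to be the main obstacle, is a careful algebraic check that the explicit $\theta$ in the statement is precisely the one making $\frac{1+\theta}{1-\theta}=\frac{1+\delta}{1-\delta}M$. Writing $A=(1+\varepsilon)(1+\varphi_2(\varepsilon))$ and $B=1-\varphi_1(\varepsilon)-\varepsilon(1+\varphi_2(\varepsilon))$, this reduces to verifying $\theta=\frac{(1+\delta)A-(1-\delta)B}{(1+\delta)A+(1-\delta)B}$. Expanding the sum and difference and collecting monomials in $\delta,\varepsilon,\varphi_1(\varepsilon),\varphi_2(\varepsilon)$ and their products, the numerator should collapse to $2\delta+2\varepsilon+(1-\delta)\varphi_1(\varepsilon)+(1+\delta+2\varepsilon)\varphi_2(\varepsilon)$ and the denominator to $2+2\delta\varepsilon-(1-\delta)\varphi_1(\varepsilon)+(1+\delta+2\delta\varepsilon)\varphi_2(\varepsilon)$, matching the stated expression for $\theta$. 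Apart from observing that in the regime where $B>0$ one also has $\theta\in[0,1)$ so the inequality is meaningful, no further ideas are required: once the identity above holds, the hypothesis of Lemma~\ref{co.2} is satisfied with $\theta$ in place of $\varepsilon$, and the theorem follows.
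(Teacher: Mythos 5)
Your proposal follows essentially the same route as the paper: derive the two-sided bound $\lambda\bigl(1-\varphi_1(\varepsilon)-\varepsilon(1+\varphi_2(\varepsilon))\bigr)\|z\|\le\|Tz\|\le\lambda(1+\varepsilon)(1+\varphi_2(\varepsilon))\|z\|$ from $\|T-U\|\le\varepsilon\|U\|$ and the approximate-similarity estimate, divide to get the constant $M$ with $\frac{1+\theta}{1-\theta}=\frac{1+\delta}{1-\delta}M$, and invoke Lemma~\ref{co.2}; the algebraic identification of $\theta$ checks out. Your added remark that one needs $1-\varphi_1(\varepsilon)-\varepsilon(1+\varphi_2(\varepsilon))>0$ for $\theta\in[0,1)$ is a point the paper leaves implicit, but it does not change the argument.
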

\begin{proof}
For any $w\in {\mathcal X}$ we have
$$\Big|\,\|Tw\|-\|Uw\|\,\Big|\leq\|Tw-Uw\|\leq\|T-U\|\,\|w\|\leq\varepsilon\|U\|\,\|w\|\leq\varepsilon\lambda(1+\varphi_2 (\varepsilon))\|w\|,$$
whence
$$-\varepsilon\lambda(1+\varphi_2 (\varepsilon))\|w\|\leq\|Tw\|-\|Uw\|\leq\varepsilon\lambda(1+\varphi_2 (\varepsilon))\|w\|.$$
Since
$$\lambda(1-\varphi_1 (\varepsilon))\|w\|\leq\|Uw\|\leq\lambda(1+\varphi_2 (\varepsilon))\|w\|,$$
therefore we get
$$\lambda\Big[(1-\varphi_1 (\varepsilon))-\varepsilon(1+\varphi_2 (\varepsilon))\Big]\|w\|\leq\|Tw\|\leq\lambda(1+\varepsilon)(1+\varphi_2 (\varepsilon))\|w\|.$$
Thus for any $z, u\in{\mathcal X}$, we have
\begin{align*}
\frac{1+\delta}{1-\delta}\|Tz\|\,\|u\|&\leq\frac{1+\delta}{1-\delta}\lambda(1+\varepsilon)(1+\varphi_2 (\varepsilon))\|z\|\,\frac{\|Tu\|}{\lambda\Big[(1-\varphi_1 (\varepsilon))-\varepsilon(1+\varphi_2 (\varepsilon))\Big]}
\\&=\frac{(1+\varepsilon)(1+\varphi_2 (\varepsilon))(1+\delta)}{[(1-\varphi_1 (\varepsilon))-\varepsilon(1+\varphi_2 (\varepsilon))](1-\delta)}\|Tu\|\|z\|
\\&=\frac{1+\frac{2\delta+2\varepsilon+(1-\delta)\varphi_1 (\varepsilon)+(1+\delta+2\varepsilon)\varphi_2 (\varepsilon)}{2+2\delta\varepsilon-(1-\delta)\varphi_1 (\varepsilon)+(1+\delta+2\delta\varepsilon)\varphi_2 (\varepsilon)}}{1-\frac{2\delta+2\varepsilon+(1-\delta)\varphi_1 (\varepsilon)+(1+\delta+2\varepsilon)\varphi_2 (\varepsilon)}{2+2\delta\varepsilon-(1-\delta)\varphi_1 (\varepsilon)+(1+\delta+2\delta\varepsilon)\varphi_2 (\varepsilon)}}\|Tu\|\|z\|
\\&=\frac{1+\theta}{1-\theta}\|Tu\|\|z\|.
\end{align*}
Therefore $\frac{1+\delta}{1-\delta}\|Tz\|\,\|u\|\leq\frac{1+\theta}{1-\theta}\|Tu\|\,\|z\|$. Now the assertion follows from Lemma \ref{co.2}.
\end{proof}

As a consequence, with $\varepsilon=0$ and $T=U$, we have
\begin{corollary}
Let $T: {\mathcal X}\to {\mathcal Y}$ be an approximate linear similarity. Then
$${x\np{\delta}}_W y\Longrightarrow {Tx \np{\theta}}_W Ty \qquad (x, y\in {\mathcal X})$$
for any $\delta\in[0, 1)$, where $\theta=\frac{2\delta+(1-\delta)\varphi_1 (\varepsilon)+(1+\delta)\varphi_2 (\varepsilon)}{2-(1-\delta)\varphi_1 (\varepsilon)+(1+\delta)\varphi_2 (\varepsilon)}$.
\end{corollary}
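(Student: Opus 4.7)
The plan is to reduce the theorem to Lemma \ref{co.2} by verifying its hypothesis
$$\frac{1+\delta}{1-\delta}\|Tz\|\,\|u\|\leq\frac{1+\theta}{1-\theta}\|Tu\|\,\|z\|\qquad(z,u\in{\mathcal X})$$
with the specific $\theta$ given in the statement. Once this is in place, the desired implication ${x\np{\delta}}_W y\Rightarrow{Tx\np{\theta}}_W Ty$ follows immediately.

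First I would use the perturbation bound $\|T-U\|\leq\varepsilon\|U\|$ together with the reverse triangle inequality to get $\bigl|\|Tw\|-\|Uw\|\bigr|\leq\varepsilon\|U\|\,\|w\|$ for every $w\in{\mathcal X}$. Combining this with the definition of approximate similarity, which provides $\lambda(1-\varphi_1(\varepsilon))\|w\|\leq\|Uw\|\leq\lambda(1+\varphi_2(\varepsilon))\|w\|$ and in particular $\|U\|\leq\lambda(1+\varphi_2(\varepsilon))$, yields the two-sided estimate
$$\lambda\bigl[(1-\varphi_1(\varepsilon))-\varepsilon(1+\varphi_2(\varepsilon))\bigr]\|w\|\leq\|Tw\|\leq\lambda(1+\varepsilon)(1+\varphi_2(\varepsilon))\|w\|.$$
This is the crucial step: it converts a perturbation hypothesis on operators into uniform two-sided norm bounds on $T$.

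Next, for arbitrary $z,u\in{\mathcal X}$ I would apply the upper bound to $\|Tz\|$ and the lower bound (inverted) to $\frac{\|u\|}{\|Tu\|}$, so that the two $\lambda$ factors cancel and one obtains
$$\frac{1+\delta}{1-\delta}\,\|Tz\|\,\|u\|\ \leq\ \frac{(1+\varepsilon)(1+\varphi_2(\varepsilon))(1+\delta)}{\bigl[(1-\varphi_1(\varepsilon))-\varepsilon(1+\varphi_2(\varepsilon))\bigr](1-\delta)}\,\|Tu\|\,\|z\|.$$
The remaining task is purely algebraic: set $A$ and $B$ to be the numerator and denominator of the fraction on the right and verify that $\frac{A}{B}=\frac{1+\theta}{1-\theta}$, equivalently $\theta=\frac{A-B}{A+B}$, reproduces the stated expression for $\theta$. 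Expanding both $A$ and $B$ and collecting terms in $\varphi_1(\varepsilon)$ and $\varphi_2(\varepsilon)$ yields $A-B=2\delta+2\varepsilon+(1-\delta)\varphi_1(\varepsilon)+(1+\delta+2\varepsilon)\varphi_2(\varepsilon)$ and $A+B=2+2\delta\varepsilon-(1-\delta)\varphi_1(\varepsilon)+(1+\delta+2\delta\varepsilon)\varphi_2(\varepsilon)$.

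The main obstacle is not conceptual but bookkeeping: one has to perform the expansion carefully to see the cancellations that produce exactly the advertised $\theta$. A minor technical point is that the lower bound $(1-\varphi_1(\varepsilon))-\varepsilon(1+\varphi_2(\varepsilon))$ must be positive for the inversion step to be legitimate; this is implicit in treating $\varepsilon$ and the $\varphi_i(\varepsilon)$ as small, which is the regime in which Lemma \ref{co.2} requires $\theta\in[0,1)$ anyway. Once the algebraic identity is confirmed, invoking Lemma \ref{co.2} with $\varepsilon$ replaced by $\theta$ closes the proof.
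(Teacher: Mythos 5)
Your argument is, almost verbatim, the paper's proof of the preceding Theorem rather than of this Corollary: you introduce an auxiliary approximate similarity $U$ and a perturbation hypothesis $\|T-U\|\leq\varepsilon\|U\|$ that appear nowhere in the Corollary's statement, and the expressions you arrive at,
$$A-B=2\delta+2\varepsilon+(1-\delta)\varphi_1(\varepsilon)+(1+\delta+2\varepsilon)\varphi_2(\varepsilon),\quad A+B=2+2\delta\varepsilon-(1-\delta)\varphi_1(\varepsilon)+(1+\delta+2\delta\varepsilon)\varphi_2(\varepsilon),$$
yield the Theorem's $\theta$, not the one stated here. Your claim that this ``reproduces the stated expression for $\theta$'' is therefore false as written: the Corollary's $\theta$ contains no $2\varepsilon$ or $2\delta\varepsilon$ terms.

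The repair is exactly the one-line specialization the paper itself performs: since $T$ is assumed to be an approximate linear similarity, take $U=T$ and $\varepsilon=0$ (so $\|T-U\|=0\leq\varepsilon\|U\|$ holds trivially). Your two-sided estimate then collapses to the definition $\lambda(1-\varphi_1(\varepsilon))\|w\|\leq\|Tw\|\leq\lambda(1+\varphi_2(\varepsilon))\|w\|$, the quotient becomes $A=(1+\varphi_2(\varepsilon))(1+\delta)$ and $B=(1-\varphi_1(\varepsilon))(1-\delta)$, and one gets $A-B=2\delta+(1-\delta)\varphi_1(\varepsilon)+(1+\delta)\varphi_2(\varepsilon)$ and $A+B=2-(1-\delta)\varphi_1(\varepsilon)+(1+\delta)\varphi_2(\varepsilon)$, which is precisely the stated $\theta$. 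Everything else in your outline --- the reduction to Lemma \ref{co.2}, the cancellation of $\lambda$, and the algebra --- is correct and coincides with the paper's route; your caveat that the lower bound (here $1-\varphi_1(\varepsilon)$) must be positive for the inversion is a point the paper leaves implicit and is worth keeping.
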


\textbf{Acknowledgement.} The author would like to thank his PhD supervisor, Prof. M. S. Moslehian, for his useful comments improving the paper.

\bibliographystyle{amsplain}

\begin{thebibliography}{99}

\bibitem{A} C. Alsina, J. Sikorska and M. Santos Tom\'as, \textit{ Norm Derivatives and Characterizations of Inner Product Spaces}, World Scientific, Hackensack, NJ, 2009.

\bibitem{AMIR} D. Amir, \textit{Characterization of Inner Product Spaces}, Birkh\"{a}user Verlag, Basel-Boston-Stuttgart, 1986.

\bibitem{B} G. Birkhoff, \textit{Orthogonality in linear metric spaces}, Duke Math. J. \textbf{1} (1935),
169--172.

\bibitem{B.T} A. Blanco and A. Turn\v{s}ek, \textit{On maps that preserves orthogonality in normed spaces}, Proc. Roy. Soc. Edinburgh Sect. A \textbf{136} (2006), 709--716.

\bibitem{C.W.1} J. Chmieli\'{n}ski and P. W\'{o}jcik, \textit{Isosceles-orthogonality preserving property and its stability}, Nonlinear Anal. \textbf{72} (2010),
1445--1453.

\bibitem{J.C.2} J. Chmieli\'{n}ski, \textit{Remarks on orthogonality pereserving mappings in normed spaces and some stability problems}, Banach J. Math. Anal. Bold \textbf (2007), no. 1, 117--124.

\bibitem{C.W.2} J. Chmieli\'{n}ski and P. W\'{o}jcik, \textit{On a $\rho$-orthogonality}, Aequationes Math. \textbf{80} (2010),
45--55.

\bibitem{M.D} F. Dadipour, M. S. Moslehian, J. M. Rassias and S.-E. Takahasi, \textit{Characterization of a generalized triangle inequality in normed spaces}, Nonlinear Anal-TMA \textbf{75} (2012), no. 2, 735--741.

\bibitem{J} R. C. James, \textit{Orthogonality in normed linear spaces}, Duke Math. J. \textbf{12} (1945), 291--301.

\bibitem{KOL} D. Koldobsky, \textit{Operators preserving orthogonality are isometries}, Proc. Roy. Soc. Edinburgh Sect. A \textbf{123} (1993),
835--837.

\bibitem{MAL} L. Maligranda, \textit{Simple norm inequalities}, Amer. Math. Monthly \textbf{113} (2006), 256--260.

\bibitem{MILI} P. M. Mili\v{c}i\'{c}, \textit{Sur la G-orthogonalit\'{e} dans les esp\'{e}ace\'{e}snorm\'{e}s}, Math. Vesnik. \textbf{39} (1987),
325--334.

\bibitem{M.T} B. Moj\v{s}kerc and A. Turn\v{s}ek, \textit{Mappings approximately preserving orthogonality in normed spaces}, Nonlinear Anal. \textbf{73} (2010),
3821--3831.

\bibitem{M.M} M. Mirzavaziri and M. S. Moslehian, \textit{Orthogonal constant mappings in isoceles orthogonal spaces}, Kragujevac J. Math. \textbf{29} (2006), 133--140.

\bibitem{B.D} B. D. Roberts, \textit{On the geometry of abstract vector spaces}, T\^{o}hoku Math. J. \textbf{39} (1934), 42--59.

\bibitem{W} P. W\'{o}jcik, \textit{Linear mappings preserving $\rho$-orthogonality}, J. Math. Anal. Appl. \textbf{386} (2012), 171--176.

\bibitem{ZAM} A. Zamani and M.S. Moslehian, \textit{Approximate Roberts orthogonality}, Aequat. Math. \textbf{89} (2015), 529-–541.

\end{thebibliography}

\end{document}